\documentclass{article}
\usepackage[preprint]{log_2026}			% for preprint version

\usepackage{booktabs}						% professional-quality tables
\usepackage{multirow}						% tabular cells spanning multiple rows
\usepackage{amsfonts}						% blackboard math symbols
\usepackage{graphicx}						% figures
\usepackage{duckuments}						% sample images

\usepackage[numbers,compress,sort]{natbib}	% for numerical citations

\usepackage{amssymb}
\usepackage{amsthm}
\newtheorem{theorem}{Theorem}
\newtheorem{lemma}[theorem]{Lemma}
\newtheorem{proposition}[theorem]{Proposition}
\newtheorem{corollary}[theorem]{Corollary}
\newtheorem{definition}[theorem]{Definition}

\title[Magnitude homology and Euler characteristics of DAGs]{Magnitude homology and Euler characteristics of directed acyclic graphs}

\author[Huntsman]{%
Steve Huntsman\\
\email{steve.huntsman@cynnovative.com}
}

\begin{document}

\maketitle

\begin{abstract}
We develop a scalable approach to computing the magnitude homology Euler characteristic for directed acyclic graphs based on decategorification. Along with motivating mathematical results and some simple controlled examples, we deploy the Euler characteristic in a proof of concept application to the dynamic analysis of multilayer perceptrons, recovering class-discriminative structure while holding simpler subgraph properties fixed. 
\end{abstract}

\section{Introduction}\label{sec:Introduction}

Magnitude is an effective notion of size for enriched categories, particularly generalized metric spaces \citep{leinster2013magnitude,leinster2017magnitude,leinster2021entropy}, that is finding applications to data science and machine learning \citep{bunch2021weighting,adamer2024magnitude,limbeck2024metric,limbeck2026geometry}, optimization \citep{pereverdieva2025comparative,emmerich2026magnitude}, and even chemistry \citep{bi2024persistent,bi2025topological}. The theory of magnitude homology is a \emph{categorification} \citep{hepworth2017categorifying,leinster2021magnitude} that is to magnitude roughly as singular homology is to ordinary Euler characteristic: the process of \emph{decategorification} recovers Euler characteristics from homologies. 

Although magnitude homology has been proposed for analyzing neural networks and connectomes for several years, to our knowledge, the only published analysis in this vein is \citet{kachura2025blurred}. In this paper, we develop a scalable approach to computing the magnitude homology Euler characteristic for directed acyclic graphs (DAGs) based on decategorification and a proof of concept application to the dynamic analysis of multilayer perceptrons (MLPs).

The paper is organized as follows. \S \ref{sec:preliminaries} covers preliminaries; \S \ref{sec:magnitudeHomology} introduces magnitude homology; \S \ref{sec:calculations} calculates magnitude homology for some DAGs; \S \ref{sec:computing} introduces an efficient algorithm for the Euler characteristic; and \S \ref{sec:layered} discusses layered DAGs, from some theoretical results to basic examples to the dynamic analysis of MLPs. After a closing remark about a plausible application to point clouds in \S \ref{sec:remarks}, the appendix \S \ref{sec:code} gives MATLAB code.

\section{Preliminaries}\label{sec:preliminaries}

Our treatment introduces magnitude homology below and gestures at a reasonable amount of self-containment, but we practically assume generic familiarity with homology and Euler characteristic at the levels of \cite{hatcher2002algebraic} and \cite{robinson2026computational}. For an abstract simplicial complex (i.e., a family of subsets of an ambient set that is closed under inclusion), recall that the Betti number $\beta_k$ is the rank of the $k$th homology group: the corresponding Euler characteristic is $\chi := \sum_k (-1)^k \beta_k$.

For simplicity, we assume from the outset that digraphs, posets, etc. are all finite. Recall that transitively reduced DAGs correspond bijectively to (Hasse diagrams of) finite posets. DAGs generally carry more information than posets. A DAG $D$, its transitive reduction (i.e., the minimal DAG with the same reachability relation), and its transitive closure (i.e., the maximal DAG with the same reachability relation) all correspond to the same poset, which we also abusively denote by $D$ despite a non-bijective correspondence: we also deploy partial order relations on DAGs without comment. Recall that a \emph{st-DAG} (also known as a \emph{two-terminal DAG}) is a DAG with a unique \emph{source} of indegree zero and a unique \emph{target} of outdegree zero. Any DAG can be made into a st-DAG by adjoining a source and target, just as any poset can be made an interval in a larger poset that adjoins a bottom $\hat 0$ and top $\hat 1$. 

Any directed graph has the shortest directed path quasimetric, which we write as $d$. 

The \emph{order complex} $\Delta(D)$ of a poset $D$ is the abstract simplicial complex with $(k-1)$-simplices given by $k$-element chains in $D$ \citep{stanley1997enumerative,wachs2007poset}. The \emph{M{\"o}bius function} $\mu_D$ has $\mu_D(u,u) = 1$ and $\mu_D(u,v) = -\sum_{u \le w < v} \mu_D(u,w)$ for $u < v$. The \emph{Philip Hall theorem} is $\mu_{\hat D} = \tilde \chi(\Delta(D))$, where $\hat D$ is the result of adjoining a (new) bottom and top to $D$, and where $\tilde \chi$ is the reduced simplicial Euler characteristic, with $\tilde \chi(\varnothing) = -1$ by convention, and $\tilde{\chi}(\Delta(D)) = \chi(\Delta(D)) - 1$.

It turns out that magnitude homology of a poset is the (simplicial) homology of its order complex \citep{kaneta2021magnitude,roff2025iterated}. Thus we immediately have that the magnitude homology Euler characteristic of a poset (though not of a DAG in general) is given by a M{\"o}bius function, which admits a very efficient computation \citep{pegolotti2022fast}. However, the decategorification of magnitude homology \eqref{eq:homologyDecategorification} suggests alternative algorithms for computing the Euler characteristic for DAGs and st-DAGs that we shall develop and apply below.

\section{Background on magnitude homology}\label{sec:magnitudeHomology}

We follow the concise formulation of magnitude homology due to \citet{hepworth2018magnitude}. Let $(X,d)$ be a Lawvere metric space, i.e., $d$ is an extended quasipseudometric.
\footnote{Here ``extended'' means $d : X \times X \rightarrow [0,\infty]$; ``quasi'' means $d$ need not be symmetric, and ``pseudo'' means that $d(x,x)$ need not be zero. A Lawvere metric space is best thought of here as a category enriched over the poset $([0,\infty],\ge)$, not least because the theory of magnitude (co)homology applies to enriched categories with a suitable notion of ``size.''}
A \emph{$k$-simplex} in $X$ is an ordered tuple $x^{(k)} := (x_0,\dots,x_k) \in X^{k+1}$ such that adjacent entries are distinct; its \emph{length} is $\lambda(x^{(k)}) := \sum_{j=1}^k d(x_{j-1},x_j)$. The $\mathbb{R}$-graded \emph{magnitude chain complex} over the coefficient ring $R$ has $k$-chains 
\[MC_{k,L}(X) := R \left \{ x^{(k)} \text{ a $k$-simplex in $X$} : \lambda(x^{(k)}) = L \right \}\] 
and differential $\partial_k : MC_{k,L}(X) \rightarrow MC_{k-1,L}(X)$ given by 
$\partial_k := \sum_{j=1}^{k-1} (-1)^j \partial^{(j)}$,
with 
\[\partial^{(j)}(x^{(k)}) := \begin{cases} \nabla_j x^{(k)} & \text{ if } d(x_{j-1},x_{j+1}) = d(x_{j-1},x_j) + d(x_j,x_{j+1}) \\ 0 & \text{ otherwise,} \end{cases}\] 
and where $\nabla_j$ acts on a tuple by deleting the corresponding entry. The appropriate notion of chain map is induced by distance-nonincreasing maps.

\begin{definition}\label{def:MH}
The \emph{magnitude homology} $MH(X)$ is the homology of $MC(X)$. 
\end{definition}
Conveniently for a digraph $D$, there is a direct sum decomposition of the form 
\begin{equation}\label{eq:directSum}
MC_{\bullet,L}(D) = \bigoplus_{s,t \in V(D)} MC_{\bullet,L}^{(s,t)}(D)
\end{equation}
where the direct summands on the right are subcomplexes generated by simplices with initial and terminal entries $s$ and $t$, respectively \citep{asao2020geometric}. This provides a very specific algebraic structure and a reduction in computational complexity that makes magnitude homology particularly convenient for unweighted (or $\mathbb{Z}_+$-weighted) digraphs.
\footnote{
Notably, magnitude homology is related to Vietoris-Rips homology via ``blurring'' that essentially replaces equalities of the form $\lambda = L$ in the definition of the chain complex with inequalities $\lambda \le L$ \citep{otter2018magnitude,cho2019quantales,govc2021persistent}, and it is also closely related to path homology \citep{asao2022magnitude}. 
}

In particular, magnitude homology is useful for characterizing DAGs, as we shall demonstrate. In light of the direct sum decomposition above, st-DAGs are particularly good candidates for characterizations using magnitude homology.

\section{Calculations of magnitude homology for some DAGs}\label{sec:calculations}

In this section, we detail some calculations that motivate applications by illustrating the impact of bottlenecks in st-DAGs and the homology of graphs that underlie MLPs. 

% Claude Opus 4.5 was able to give a proof of Theorem \ref{thm:pinchHomology} after we proposed it on the basis of numerical experiments: the version below is our adaptation.
% % <TODO>
% % \footnote{See \url{https://claude.ai/share/b2237ccf-f8c0-4d8d-9986-72172d9829a3}. Note that this begins with a much more abstract purported proof that we have not evaluated, and which may or may not be correct.}
% % </TODO>

\begin{theorem}\label{thm:pinchHomology}
Let $D$ be a finite st-DAG with source $s$ and target $t$, let $D'$ be a finite st-DAG with source $s'$ and target $t'$, and let $P$ be the st-DAG obtained by identifying $t$ and $s'$. Then for all $k$ and $L$, 
\begin{equation}\label{eq:pinchHomology}
MH_{k,L}^{(s,t')}(P) = 0.
\end{equation}
\end{theorem}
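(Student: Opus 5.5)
The plan is to exhibit an explicit contracting chain homotopy on the summand $MC_{\bullet,L}^{(s,t')}(P)$ by \emph{inserting the pinch vertex}. Write $c$ for the vertex obtained by identifying $t$ and $s'$. I assume, as the st-DAG hypothesis intends, that $s\neq t$ and $s'\neq t'$, so that $c\neq s$ and $c\neq t'$. If $s=c$, say, then the summand is just $MC^{(c,t')}(D')$, which need not be acyclic, so this assumption is essential.

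\textbf{Step 1: structure of the simplices.} Consider a $k$-simplex $(x_0,\dots,x_k)$ with $x_0=s$, $x_k=t'$ and finite length $L$. Every $d(x_{j-1},x_j)$ is finite, so $x_0\le x_1\le\cdots\le x_k$ in the reachability order of $P$. Every vertex of $P$ lies in $D$ (so is $\le c$) or in $D'$ (so is $\ge c$), and $c$ is the only vertex in both. Every directed path from a vertex of $D$ to a vertex of $D'$ passes through $c$. Hence, for $u\le c\le v$,
\[
d(u,v)=d(u,c)+d(c,v).
\]
Consequently each simplex has a well-defined crossing index $i$: the largest $i$ with $x_i\le c$. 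Since $x_0=s\le c$ and $x_k=t'\not\le c$, we have $0\le i<k$, and $x_i\le c\le x_{i+1}$. Because adjacent entries are distinct, at most one of $x_i,x_{i+1}$ equals $c$.

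\textbf{Step 2: the homotopy.} Define $h:MC_{k,L}^{(s,t')}\to MC_{k+1,L}^{(s,t')}$ on basis elements by
\[
h(x_0,\dots,x_k)=(-1)^{i+1}(x_0,\dots,x_i,c,x_{i+1},\dots,x_k)
\]
when $c\notin\{x_i,x_{i+1}\}$, and $h=0$ otherwise. By the displayed identity the inserted tuple has the same length $L$, adjacent entries remain distinct, and the endpoints are unchanged, so $h$ lands in the correct summand.

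\textbf{Step 3: the identity $\partial h+h\partial=\mathrm{id}$.} I would verify this by cases.
\begin{itemize}
\item \emph{Case $c$ does not appear.} The face of $h(x)$ deleting the inserted $c$ is always nonzero, since $c$ lies on a geodesic; with the sign chosen it contributes exactly $x$. Any other nonzero face of $h(x)$ either deletes some $x_j$ away from the crossing, and then cancels the term $h(\partial^{(j)}x)$ (deleting $x_j$ does not change the crossing position relative to $c$ except for an index shift that the sign $(-1)^{i+1}$ absorbs), or it deletes $x_i$ or $x_{i+1}$ adjacent to $c$. In the latter case one checks that the face condition for $(x_{i-1},x_i,c)$ matches that for $(x_{i-1},x_i,x_{i+1})$ via the cut identity, so these terms cancel against $h$ applied to the corresponding faces of $x$.
\item \emph{Case $c=x_i$ or $c=x_{i+1}$.} Here $h(x)=0$, and $h(\partial x)$ contains exactly one surviving term, namely the face deleting $c$ followed by reinsertion of $c$, which returns $x$. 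All other faces of $x$ still contain $c$ at the crossing and are killed by $h$.
\end{itemize}
Thus the identity is nullhomotopic, and $MH_{k,L}^{(s,t')}(P)=0$ for all $k,L$. There are no $0$-simplices in this summand, since $s\neq t'$.

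\textbf{Main obstacle.} I expect the main obstacle to be the sign and face-condition bookkeeping in the first case of Step 3. The delicate point is the faces deleting $x_i$ or $x_{i+1}$ next to the inserted $c$, where the crossing index of the face changes. If the direct verification becomes messy, I would instead use an algebraic discrete Morse matching. Pair each simplex not containing $c$ with the simplex obtained by inserting $c$ at its crossing. The corresponding incidence coefficient is $\pm1$, because deleting $c$ is always a valid face. This matching is perfect, since every simplex containing $c$ arises uniquely this way. It is acyclic, because matched arrows strictly increase the number of occurrences of $c$ while the other face maps never increase it. Acyclicity of the complex, and hence the statement \eqref{eq:pinchHomology}, then follows from the Morse theorem.
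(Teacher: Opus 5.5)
Your proposal is correct and is essentially the paper's proof. You use the same contracting homotopy (insert the pinch vertex after the last vertex of $D$ with sign $(-1)^{i+1}$, and set it to zero when the pinch vertex already appears) and the same two-case verification. Your explicit hypothesis $s\neq t$, $s'\neq t'$ and your cut-identity check that the face conditions at the crossing agree fill in points the paper's ``after cancellations'' step leaves implicit.

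One small slip in the fallback Morse argument: acyclicity needs that unmatched face maps never \emph{decrease} the number of occurrences of $c$, because deleting $c$ is always the matched arrow. Saying they never increase it is true but does not rule out cycles.
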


\begin{proof}
By Theorem 2.10 of \citet{hatcher2002algebraic}, a chain homotopy between zero and the identity implies trivial homology. It therefore suffices to produce a chain homotopy $h_{k,L}: MC_{k,L}^{(s,t)}(P) \rightarrow MC_{k+1,L}^{(s,t)}(P)$ satisfying 
\begin{equation}\label{eq:chainHomotopyIdentityNull}
(\partial_{k+1,L} \circ h_{k,L} + h_{k-1,L} \circ \partial_{k,L})(x) \equiv x.
\end{equation}

Let $v$ be the vertex in $P$ that is identified with $t$ and $s'$ (we will somewhat abusively make this identification for convenience below), and let $x$ be a generating simplex of $MC_{k,L}^{(s,t)}(P)$. That is, $x = (s = x_0, \dots, x_k = t')$, with $\sum_{j=1}^k d(x_{j-1},x_j) = L$. Call $x_1, \dots, x_{k-1}$ the \emph{internal vertices} of $x^{(k)}$. Since the differential removes internal vertices that lie on geodesics and $v$ is always such a vertex when it is present, a map $h$ that inserts $v$ into simplices with the right sign is a good candidate to satisfy \eqref{eq:chainHomotopyIdentityNull}. 

Specifically, let $h(x) := 0$ if $v$ is an internal vertex of $x$; otherwise, set 
\[h(x) := (-1)^{i+1} \cdot (x_0, \dots, x_i, v, x_{i+1}, \dots, x_k)\]
where $i$ is the largest index s.t. $x_i \in D$, so that $x_{i+1} \in D' \setminus \{v\}$. Extend $h$ to $MC_{\bullet,\bullet}^{(s,t)}(P)$ by linearity. The proof that $\partial \circ h + h \circ \partial$ is the identity amounts to checking the cases where $v$ i) is and ii) is not an internal vertex of $x$.

Suppose first that $v$ is an internal vertex of $x$, say $v = x_\ell$. Then $h(x) = 0$, so $(\partial \circ h)(x) = 0$. Meanwhile, $\partial(x) = (-1)^\ell \cdot (x_0, \dots, x_{\ell-1}, x_{\ell+1}, \dots, x_k) + y$, where here $y$ is a linear combination of simplices that contain $v$ and that is therefore annihilated by $h$. It follows that
\begin{align}
(h \circ \partial)(x) & = (-1)^\ell \cdot h(x_0, \dots, x_{\ell-1}, x_{\ell+1}, \dots, x_k) \nonumber \\
& = (-1)^\ell \cdot (-1)^{\ell-1+1} \cdot h(x_0, \dots, x_{\ell-1}, v, x_{\ell+1}, \dots, x_k) \nonumber \\
& = x.\nonumber
\end{align}
Therefore \eqref{eq:chainHomotopyIdentityNull} holds if $v$ is an internal vertex of $x$.

Suppose now that $v$ is not an internal vertex of $x$, and as before let $i$ is the largest index s.t. $x_i \in D$, so that $x_{i+1} \in D' \setminus \{v\}$. Then 
\begin{align}
(\partial \circ h)(x) & = \sum_{j=1}^i (-1)^{i+1} \cdot (-1)^j \cdot \partial^{(j)} (x_0, \dots, x_i, v, x_{i+1}, \dots, x_k) \nonumber \\
& + (-1)^{i+1} \cdot (-1)^{i+1} \cdot \partial^{(i+1)} (x_0, \dots, x_i, v, x_{i+1}, \dots, x_k) \nonumber \\
& + \sum_{j=i+1}^{k-1} (-1)^{i+1} \cdot (-1)^{j+1} \cdot \partial^{(j+1)} (x_0, \dots, x_i, v, x_{i+1}, \dots, x_k) \nonumber
\end{align}
and
\begin{align}
(h \circ \partial)(x) & = \sum_{j=1}^i (-1)^i \cdot (-1)^j \cdot \partial^{(j)} (x_0, \dots, x_i, v, x_{i+1}, \dots, x_k) \nonumber \\
& + \sum_{j=i+1}^{k-1} (-1)^i \cdot (-1)^{j+1} \cdot \partial^{(j+1)} (x_0, \dots, x_i, v, x_{i+1}, \dots, x_k).  \nonumber
\end{align}
After cancellations, we have that 
\begin{align}
(\partial \circ h + h \circ \partial)(x) & = (-1)^{i+1} \cdot (-1)^{i+1} \cdot \partial^{(i+1)} (x_0, \dots, x_i, v, x_{i+1}, \dots, x_k) \nonumber \\
& = x, \nonumber
\end{align}
establishing \eqref{eq:chainHomotopyIdentityNull} when $v$ is not an internal vertex of $x$. The result follows.
\end{proof}

\begin{definition}\label{def:MLP}
Let $K^\rightarrow_{n_1,\dots,n_M}$ denote the DAG corresponding to the architecture of a MLP with $M$ layers of successive widths $n_1, \dots, n_M$.  
That is, $V(K^\rightarrow_{n_1,\dots,n_M}) = \cup_{m=1}^M K_m$ with $K_m = \{m\} \times [n_m]$, and 
\[A(K^\rightarrow_{n_1,\dots,n_M}) = \{(v,v') : v \in K_m, \, v' \in K_{m+1}, \, 1 \leq m \leq M-1\}.\] 
For $v \equiv (\{m\} \times j_m) \in K^\rightarrow_{n_1,\dots,n_M}$, let $\Lambda(v) := m$, i.e., $\Lambda$ indicates the layer of $K^\rightarrow_{n_1,\dots,n_M}$ that its argument belongs to.
\end{definition}

Write $\beta_{k,L}^{(s,t)}$ for the Betti numbers (i.e., the ranks) of $MH_{k,L}^{(s,t)}$.

\begin{lemma}\label{lem:mlpHomologyST}
\begin{equation}\label{eq:mlpHomologyST}
\beta_{k,L}^{(s,t)}(K^\rightarrow_{n_1,\dots,n_M}) = 
	\begin{cases}
		\prod_{m = \Lambda(s)+1}^{\Lambda(t)-1} (n_m-1) & \text{if } k = L \text{ and } d(s,t) = L; \\
		0 & \text{otherwise}.
	\end{cases}
\end{equation}
\end{lemma}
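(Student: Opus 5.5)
The plan is to reduce the computation to the reduced simplicial homology of an order complex and then evaluate that homology by a join argument.

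\textbf{Step 1: distances in $K^\rightarrow_{n_1,\dots,n_M}$.} First I record the shortest-path quasimetric. For vertices $u,w$:
\begin{itemize}
\item $d(u,w)=\Lambda(w)-\Lambda(u)$ if $\Lambda(u)<\Lambda(w)$, because every vertex in layer $m$ is joined to every vertex in layer $m+1$;
\item $d(u,u)=0$;
\item $d(u,w)=\infty$ in every other case, including distinct vertices in the same layer.
\end{itemize}
Let $x=(s=x_0,\dots,x_k=t)$ be a $k$-simplex of finite length. Each step must then go strictly up in layer, so $\Lambda(x_0)<\cdots<\Lambda(x_k)$, and the length telescopes:
\[\lambda(x)=\Lambda(t)-\Lambda(s)=d(s,t).\]
Consequently $MC^{(s,t)}_{\bullet,L}=0$ unless $L=d(s,t)<\infty$, which gives the vanishing part of \eqref{eq:mlpHomologyST} except in the case $k\neq L$, handled in Step 3. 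The degenerate cases are as follows.
\begin{itemize}
\item If $s=t$, the only simplex is $(s)$, with $k=L=0$, and the empty product is $1$.
\item If $\Lambda(t)=\Lambda(s)+1$, the only simplex is $(s,t)$, with $k=L=1$, and again the product is empty.
\end{itemize}

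\textbf{Step 2: identification with an order complex.} Assume $L=d(s,t)=\Lambda(t)-\Lambda(s)\ge 2$. Every triple of vertices with strictly increasing layers satisfies the geodesic equality $d(x_{j-1},x_{j+1})=d(x_{j-1},x_j)+d(x_j,x_{j+1})$. Hence every face map $\partial^{(j)}$ is the full deletion $\nabla_j$.

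The internal vertices $(x_1,\dots,x_{k-1})$ of a simplex form exactly a $(k-2)$-simplex of the order complex $\Delta(I)$, where $I$ is the open interval $(s,t)$ of the poset. Deleting internal vertex $j$ carries sign $(-1)^j$, which is the simplicial sign $(-1)^{j-1}$ up to a global factor of $-1$. The simplex $(s,t)$ is not available here (it has length $1\neq L$), but for this bookkeeping it plays the role of the empty simplex. One then checks that $\partial_2$ sends each $(s,x_1,t)$ to $0$, consistent with the augmentation being absent in length $L$. I therefore need to verify carefully whether the comparison is with reduced or unreduced homology. This is the step I expect to be the main technical obstacle: fixing signs and the degree shift precisely.

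The expected outcome of this check is
\[MH^{(s,t)}_{k,L}\cong H_{k-2}(\Delta(I)),\]
which agrees with the reduced group $\tilde H_{k-2}(\Delta(I))$ whenever $I$ is a join of at least two nonempty sets. This is the known fact that magnitude homology of a poset is order-complex homology \citep{kaneta2021magnitude}, so I may cite it instead of redoing the signs.

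\textbf{Step 3: homology of the interval.} The interval $I$ is the ordinal sum of the antichains $K_m$ for $\Lambda(s)<m<\Lambda(t)$. There are $r=L-1$ such layers. The order complex of an ordinal sum is the join of the order complexes of the summands, so
\[\Delta(I)=K_{\Lambda(s)+1}*\cdots*K_{\Lambda(t)-1},\]
a join of $r$ discrete sets. A discrete set of size $n$ is a wedge of $n-1$ copies of $S^0$. Joins of wedges of spheres are wedges of spheres, with reduced homology multiplying under join via the join Künneth formula $\tilde H_{p+q+1}(A*B)\cong\bigoplus\tilde H_p(A)\otimes\tilde H_q(B)$, valid over a field or for free groups.

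It follows that $\Delta(I)$ is a wedge of $\prod_m(n_m-1)$ spheres of dimension $r-1$. Its reduced homology is therefore concentrated in degree $r-1$, which corresponds to $k-2=r-1$, i.e.\ $k=r+1=L$, with rank equal to the stated product. As a cross-check, the Philip Hall theorem should give $\mu(s,t)=(-1)^{r}\prod(n_m-1)$, consistent with this answer.
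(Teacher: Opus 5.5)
\textbf{Gap in Step 2.} This is the reduced-versus-unreduced question you flagged, and it is resolved the wrong way. The $1$-simplex $(s,t)$ \emph{is} present in length $L$. A magnitude simplex is any tuple with distinct adjacent entries, and its length is measured with $d$, not by counting arcs, so $\lambda((s,t))=d(s,t)=L$, not $1$. Every intermediate $x_1$ lies on an $s$--$t$ geodesic, so $\partial_2(s,x_1,t)=-\partial^{(1)}(s,x_1,t)=-(s,t)\neq 0$.

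Thus $(s,t)$ is a genuine augmentation. The correct statement is $MC^{(s,t)}_{k,L}\cong\tilde C_{k-2}(\Delta(I))$, hence $MH^{(s,t)}_{k,L}\cong\tilde H_{k-2}(\Delta(I))$, as in Theorem~\ref{thm:layeredChain} and Corollary~\ref{cor:mobius}. The order-complex comparison you proposed to cite is likewise a statement about reduced homology.

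Your ``expected outcome'' $MH^{(s,t)}_{k,L}\cong H_{k-2}(\Delta(I))$ is false and contradicts the lemma:
\begin{itemize}
\item for $L=2$ it gives $\beta^{(s,t)}_{2,2}=n_{\Lambda(s)+1}$ rather than $n_{\Lambda(s)+1}-1$;
\item for $L\ge 3$ it gives a spurious $\beta^{(s,t)}_{2,L}=1$, since $\Delta(I)$ is connected.
\end{itemize}
Your claimed agreement of $H_{k-2}$ and $\tilde H_{k-2}$ for joins fails precisely in degree $0$, which is the degree at issue. Step 3 silently switches to reduced homology, so your final count is right, but it does not follow from Step 2 as written. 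Relatedly, your cross-check sign is off: it should read $\mu(s,t)=\tilde\chi(\Delta(I))=(-1)^{r-1}\prod_m(n_m-1)=(-1)^L\prod_m(n_m-1)$, matching Corollary~\ref{cor:mlpEuler}, not $(-1)^r$.

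\textbf{Comparison with the paper.} Once repaired, your argument is not the paper's proof of this lemma. It is the ``more elegant accounting'' the paper gives later in \S\ref{sec:mlp}, via Theorem~\ref{thm:layeredChain}, the join decomposition of the order complex, and the K\"unneth theorem (Proposition~\ref{pro:mlpHomologyST}).

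The paper's own proof is a direct top-degree computation. Since $MC^{(s,t)}_{L+1,L}=0$, one has $MH^{(s,t)}_{L,L}=\ker\partial_{L,L}$. Deleting different internal vertices of a maximal chain yields different simplices, so this kernel is the tensor product over intermediate layers of the sum-zero subspaces of $R^{n_m}$, of dimension $\prod_m(n_m-1)$. That argument is elementary and needs no augmentation or sign bookkeeping, but it only addresses $k=L$. Your order-complex route, carried out with reduced homology, also gives the vanishing for $k<L$.
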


% Claude Opus 4.5 was able to give a proof of Lemma \ref{lem:mlpHomologyST} that explained our own numerical experiments: the version below is our adaptation.
% % <TODO>
% % \footnote{See \url{https://claude.ai/share/4855b193-6948-4ed2-a816-0eb7eb85553a}.}
% % </TODO>

\begin{proof}
It suffices to show that $\dim \ker \partial_{k,k}^{(s,t)}$ is given by the right hand side of \eqref{eq:mlpHomologyST}, since $MC_{k+1,k}^{(s,t)} = 0$. With this in mind, consider a chain of the form
\[\alpha \equiv (s) \otimes \bigotimes_{m=\Lambda(s)+1}^{\Lambda(t)-1} \left ( \sum_{i_m=1}^{n_m} \alpha^{(m)}_{i_m} \cdot (x_m) \right ) \otimes (t)\] 
where $\sum_{i_m=1}^{n_m} \alpha^{(m)}_{i_m} = 0$ and $\Lambda(x_m) = m$ for all $m$. For $k = \Lambda(t)-\Lambda(s) = L$, each term of $\partial_{k,L}^{(s,t)}$ applied to $\alpha$ gives zero, since $\sum_{i_m=1}^{n_m} \alpha^{(m)}_{i_m} = 0$ factors out of the $m$th term. That is, $\alpha \in \ker \partial_{k,L}^{(s,t)}$. The dimension of the space of such chains is given by \eqref{eq:mlpHomologyST}. Meanwhile, the same reasoning as above shows that any chain not in this space has nonzero boundary.
\end{proof}

Summing over sources and targets gives the following corollary:
\footnote{For comparison, the path homology Betti numbers are $\beta_k = \delta_{k,M-1} \cdot \prod_{m=1}^M (n_m-1)$ \citep{chowdhury2019path}.}
\begin{corollary}\label{cor:mlpHomology}
\begin{equation}\label{eq:mlpHomology}
\beta_{k,L}(K^\rightarrow_{n_1,\dots,n_M}) = 
	\begin{cases}
		\sum_{m=1}^M n_m & \text{if } k = L = 0; \\
		\sum_{m=1}^{M-L} n_m \cdot \left ( \prod_{\ell=m+1}^{m+L-1} (n_\ell-1) \right ) \cdot n_{m+L} & \text{if } k = L > 0; \\
		0 & \text{otherwise}.
	\end{cases}
\end{equation}
\end{corollary}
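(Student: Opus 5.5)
The plan is to use the direct sum decomposition \eqref{eq:directSum}, which holds after passing to homology: $MH_{k,L}(D) = \bigoplus_{s,t} MH_{k,L}^{(s,t)}(D)$. Consequently
\[\beta_{k,L}(K^\rightarrow_{n_1,\dots,n_M}) = \sum_{s,t} \beta_{k,L}^{(s,t)}(K^\rightarrow_{n_1,\dots,n_M}).\]
Lemma \ref{lem:mlpHomologyST} then computes each summand. The remaining work is to sort the pairs $(s,t)$ by their values of $d(s,t)$.

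First I would record the distance function of $K^\rightarrow_{n_1,\dots,n_M}$. Every arc goes from layer $m$ to layer $m+1$, and consecutive layers are completely joined. Hence $d(s,t) = \Lambda(t)-\Lambda(s)$ when $\Lambda(s) \le \Lambda(t)$, except that $d(s,t)=\infty$ if $\Lambda(s)=\Lambda(t)$ and $s \ne t$. In all other cases $d(s,t) = \infty$. By Lemma \ref{lem:mlpHomologyST}, the pair $(s,t)$ contributes only when $k = L = d(s,t)$ is finite, and all other pairs contribute zero. This gives the ``otherwise'' clause immediately.

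For $k = L = 0$, the condition $d(s,t)=0$ forces $s=t$. The product in Lemma \ref{lem:mlpHomologyST} then runs over the empty range $\Lambda(s)+1,\dots,\Lambda(s)-1$, so it equals $1$. Summing over the vertices gives $\sum_m n_m$. Here I would check that the lemma is meant to cover the degenerate case $s=t$, where the only chain is the $0$-simplex $(s)$ and the differential vanishes, so $\beta = 1$ directly.

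For $k = L > 0$, the contributing pairs are those with $\Lambda(s) = m$ and $\Lambda(t) = m+L$ for some $1 \le m \le M-L$. There are $n_m$ choices of $s$ and $n_{m+L}$ choices of $t$. Each such pair contributes $\prod_{\ell=m+1}^{m+L-1}(n_\ell - 1)$, which is independent of the specific $s$ and $t$. Summing over $m$ yields the middle case of \eqref{eq:mlpHomology}.

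The only genuine subtlety, and thus the step I expect to need care, is justifying that the direct sum splitting is one of chain complexes, so that Betti numbers add. The differential only deletes internal entries, so the endpoints $s,t$ are preserved and each $MC^{(s,t)}$ is a subcomplex; I would note this explicitly. The rest is bookkeeping.
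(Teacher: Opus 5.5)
Your proposal is correct and takes the same route as the paper, which obtains the corollary in one line by summing Lemma \ref{lem:mlpHomologyST} over all source--target pairs through the decomposition \eqref{eq:directSum}. Your explicit sorting of pairs by $d(s,t)=\Lambda(t)-\Lambda(s)$, your note that each $MC^{(s,t)}$ is a subcomplex (so Betti numbers add), and your direct check of the degenerate case $s=t$ fill in bookkeeping that the paper leaves implicit.
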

For example, 
$\beta_{0,0}(K^\rightarrow_{5,4,3,2}) = 5+4+3+2 = 14$;
$\beta_{1,1}(K^\rightarrow_{5,4,3,2}) = 5 \cdot 4 + 4 \cdot 3 + 3 \cdot 2 = 38$;
$\beta_{2,2}(K^\rightarrow_{5,4,3,2}) = 5 \cdot 3 \cdot 3 + 4 \cdot 2 \cdot 2 = 61$;
$\beta_{3,3}(K^\rightarrow_{5,4,3,2}) = 5 \cdot 3 \cdot 2 \cdot 2 = 60$;
and all other Betti numbers are zero. Similarly,
$\beta(K^\rightarrow_{2,11,3,7,5}) = \text{diag}(28,111,304,940,1200,0,0,\dots)$;
and $\beta(K^\rightarrow_{10,2,8,4,6}) = \text{diag}(30,92,280,532,1260,0,0,\dots)$.

We also get a simple formula for the source-target Euler characteristic when $n_1 = n_M = 1$, so that $K^\rightarrow_{1,n_2,\dots,n_{M-1},1}$ is a st-DAG:
\begin{corollary}\label{cor:mlpEuler}
\begin{equation}\label{eq:mlpEuler}
\chi_{\bullet,M-1}^{(s,t)}(K^\rightarrow_{1,n_2,\dots,n_{M-1},1}) = (-1)^{M-1} \prod_{m=2}^{M-1} (n_m-1).
\end{equation}
\end{corollary}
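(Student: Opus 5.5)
We read off the Euler characteristic from Lemma~\ref{lem:mlpHomologyST} by decategorification at fixed length $L = M-1$. Take $s$ to be the unique vertex of layer $1$ and $t$ the unique vertex of layer $M$, so that $\Lambda(s) = 1$, $\Lambda(t) = M$, and $d(s,t) = M-1$, since every arc advances exactly one layer. The source-target Euler characteristic at length $L$ is
\[\chi_{\bullet,L}^{(s,t)} := \sum_k (-1)^k \beta_{k,L}^{(s,t)}.\]
This sum is finite because the magnitude chain groups $MC^{(s,t)}_{k,L}$ vanish for $k > L$: every arc has length at least $1$, and any simplex from $s$ to $t$ has length at least $d(s,t)$.

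First, we apply Lemma~\ref{lem:mlpHomologyST} with $n_1 = n_M = 1$ and $L = M-1$. Since $d(s,t) = L$, the only nonzero Betti number is at $k = L = M-1$, where
\[\beta_{M-1,M-1}^{(s,t)} = \prod_{m=2}^{M-1}(n_m - 1).\]
Second, we substitute this into the alternating sum. Only the $k = M-1$ term survives, which gives the sign $(-1)^{M-1}$ and hence \eqref{eq:mlpEuler}.

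The only point needing care is the bookkeeping of what $\chi_{\bullet,M-1}^{(s,t)}$ means and whether the ranks are taken over a field. If $R = \mathbb{Z}$, Betti numbers are ranks, and the Euler characteristic can equally be computed from the ranks of the chain groups. As a sanity check that avoids depending on the precise form of the lemma, we can verify the formula by decategorification directly. Simplices $(s = x_0, \dots, x_k = t)$ of length exactly $M-1$ must have strictly increasing layers and consecutive entries at distance $d(x_{j-1}, x_j) = \Lambda(x_j) - \Lambda(x_{j-1})$. Such a simplex therefore corresponds to choosing a subset $S \subseteq \{2, \dots, M-1\}$ of size $k-1$ of intermediate layers, together with a vertex in each chosen layer. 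Hence
\[\operatorname{rank} MC_{k,M-1}^{(s,t)} = \sum_{|S| = k-1} \prod_{m \in S} n_m.\]
Summing with signs gives
\[\sum_k (-1)^k \sum_{|S| = k-1} \prod_{m \in S} n_m = -\prod_{m=2}^{M-1}(1 - n_m) = (-1)^{M-1}\prod_{m=2}^{M-1}(n_m - 1),\]
which agrees with \eqref{eq:mlpEuler}.

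This direct chain-level count is the step I expect to be the only potential obstacle. It reduces to identifying the length-$L$ simplices and applying the binomial-type expansion of $\prod_m (1 - n_m)$, both of which are routine.
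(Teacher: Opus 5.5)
Your proof is correct and matches the paper, which states the corollary without separate proof. It specializes Lemma~\ref{lem:mlpHomologyST} to $n_1 = n_M = 1$ and $L = d(s,t) = M-1$: only $k = M-1$ contributes, which gives the sign $(-1)^{M-1}$. Your chain-level count $-\prod_{m=2}^{M-1}(1-n_m)$ is also correct and bypasses the homology computation entirely; in effect it computes the M\"obius function $\mu(s,t)$ directly, consistent with Corollary~\ref{cor:mobius}.
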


\section{Computing the Euler characteristic}\label{sec:computing}

For a st-DAG $D$ the general (de)categorification formula \citep{leinster2021magnitude}
\begin{align}
\label{eq:homologyDecategorification}
((\exp[-\tau d])^{-1})_{st} & \overset{\tau \gg 0}{=} \sum_{k,L} (-1)^k \beta_{k,L}^{(s,t)} \exp(-\tau L) \nonumber \\ 
& = \sum_L \chi_{\bullet,L}^{(s,t)} \exp(-\tau L)
\end{align}
is particularly convenient: it exactly produces the magnitude homology Euler characteristic via Laplace transform \emph{without ever computing magnitude homology explicitly}, and it converges for all $\tau > 0$, since only finitely many Betti numbers are nonzero.
\footnote{
The proof of Theorem 4.9 in \citet{govc2021persistent} details the link with blurred magnitude homology.
}
\footnote{
An easy hack to work over weighted graphs is to replace single weighted arcs with series of unweighted arcs. This works exactly for integer weights and approximately for positive weights. 
}

In a bit more detail, write 
\[A_{ij} := \exp(-\tau_i L_j),\] 
where $L_j$ are the unique lengths of paths from $s$ to $t$, and 
\[b_i := ((\exp[-\tau_i d])^{-1})_{st}.\] 
Then \eqref{eq:homologyDecategorification} is of the form $Ax = b$ where $x_j := \chi_{\bullet,L_j}^{(s,t)}$. In practice, the $L_j$ (and $d$) are integral and determined by $D$, but we can freely choose the $\tau_i$. In particular, if $A$ is invertible over $\mathbb{Q}$, then $b$ is rational. Thus taking $\tau_i := \log q_i$ for $q_i \in \mathbb{Q}$ gives $A_{ij} = q_i^{-L_j} \in \mathbb{Q}$ and we can compute the Euler characteristic accordingly without resorting to numerically truculent Laplace transform inversions \citep{abate2006unified,dingfelder2015improved} or Prony-type methods for structured function reconstruction \citep{plonka2018numerical}. 

Instead, we have a numerically truculent linear algebra problem from the outset. $A$ is a generalized Vandermonde matrix, inheriting all of the badly conditioned behavior of an ordinary Vandermonde matrix \citep{gautschi2011optimally} without the benefit of a stable solver like Algorithm 4.6.2 in \citet{golub2013matrix} that ordinarily provides compensation. As such, it is not possible to get reliable exact solutions with a straightforward linear solver, and it is not feasible to solve its $\mathbf{NP}$-hard integer least squares variant $\arg \min_{x \in \mathbb{Z}^{\text{dim}(x)}} \|Ax-b\|^2$ by reduction to a closest vector problem. There are instead two practical intermediate approaches: a very robust, complex, and relatively slow and still heuristic method of performing lattice reduction in arbitrary precision arithmetic, and a reasonably robust, simple, and very fast heuristic using the nearest plane algorithm that returns the closest lattice point to a target after Gram-Schmidt \citep{babai1986lovasz,chang2013effects}. For fast/batched floating-point calculations we use the latter approach, with 
\[q_i = (1+|\mathcal{L}|^{-1})^i \in \mathbb{Q},\]
where here $\mathcal{L}$ is the set of unique lengths of paths from $s$ to $t$. 

A variant of this algorithm that takes $\tau := -\log q$ (note the change of sign relative to the above discussion) so that $\exp(-\tau d(u,v)) = q^{d(u,v)}$ gives
\begin{equation}
\label{eq:homologyDecategorificationPoly}
(Z^{-1})_{st} = \sum_L \chi_{\bullet,L}^{(s,t)} q^L
\end{equation}
which can be solved exactly over $\mathbb{Z}[q]$ \emph{versus} $\mathbb{Q}$. Using polynomial arithmetic toward this end \emph{versus} floating-point arithmetic dispenses with the numerical truculence of Vandermonde matrices, obviates the need for (e.g.) the nearest plane heuristic, and avoids \emph{ad hoc} tuning for $q_i$, all of which can contribute to errors at scale. On the other hand, the floating-point approach is usually faster. Either approach is dramatically faster than a direct computation of magnitude homology. In batched operation, we typically use the floating-point approach most of the time and occasionally verify outputs using the polynomial arithmetic approach: this gives us the benefits of speed with reasonable assurance of correctness.

Experiments with both our implementation of the floating-point algorithm (\S \ref{sec:floatingPointArithmetic}) and an adaptation for polynomial arithmetic (\S \ref{sec:polynomialArithmetic}) consistently give results for DAGs that agree with the theoretical results above for smaller instances, though for large instances (roughly, on the order of a thousand or more vertices) they sometimes disagree due to numerics, in which case the polynomial arithmetic implementation is the one to trust: for applications such as \S \ref{sec:MLPEulerExamples}, the batched operation approach above determines this. Both implementations in \S \ref{sec:code} are under 100 lines, much of it comments, whitespace, warnings, and assertions.

\section{Layered DAGs}\label{sec:layered}

A finite st-DAG is \emph{layered} if for $u \le v$, every path from $u$ to $v$ has length $d(u,v)$. A layered DAG $D$ is graded by $\Lambda : V(D) \rightarrow \mathbb{N}$ with $\Lambda(v) - \Lambda(u) = d(u,v)$ for $u \le v$. 

\begin{theorem}\label{thm:layeredChain}
Let $D$ be a layered DAG and $u \le v$. Then
\begin{equation}\label{eq:layeredChain}
MC_{k,d(u,v)}^{(u,v)}(D) \cong \tilde C_{k-2}(\Delta((u,v)))
\end{equation}
where $\tilde C_k$ indicates the augmented simplicial chain complex of $\Delta((u,v))$ taken as an open interval order complex. Furthermore, $MC_{k,L}^{(u,v)}(D) = 0$ for $L \ne d(u,v)$.
\end{theorem}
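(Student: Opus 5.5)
We identify generators on both sides and check that the differentials match. Let $x = (u = x_0, x_1, \dots, x_k = v)$ be a $k$-simplex of $D$ from $u$ to $v$ with adjacent entries distinct. Its length $\lambda(x)$ is finite only if each consecutive pair satisfies $x_{j-1} \le x_j$. In the layered case each such distance is $d(x_{j-1},x_j) = \Lambda(x_j) - \Lambda(x_{j-1})$, so the sum telescopes: $\lambda(x) = \Lambda(v) - \Lambda(u) = d(u,v)$ whenever it is finite. If some consecutive pair is incomparable, the length is $\infty$. We take the convention that $L$ ranges over finite values, or else note that infinite-length simplices form a separate summand not counted here. Either way, every finite-length simplex from $u$ to $v$ has length exactly $d(u,v)$, which gives $MC_{k,L}^{(u,v)}(D) = 0$ for $L \ne d(u,v)$.

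Next we describe the generators of $MC_{k,d(u,v)}^{(u,v)}(D)$. They are exactly the tuples $u = x_0 < x_1 < \dots < x_k = v$, strictly increasing because adjacent entries are distinct and comparable in a poset. The internal vertices $x_1 < \dots < x_{k-1}$ form a $(k-1)$-element chain in the open interval $(u,v)$, which is a $(k-2)$-simplex of $\Delta((u,v))$. For $k = 1$ the internal chain is empty and corresponds to the augmentation generator in $\tilde C_{-1}$. (For $k=0$ and $u = v$ we get the lone tuple $(u)$, and the convention $\tilde C_{-2}$ should be read accordingly; we note this degenerate case separately.) This assignment is a bijection of bases, so we define the isomorphism on generators and extend linearly.

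The main step is compatibility of differentials, and it reduces to checking the geodesic condition. For the tuple $x$ above and $1 \le j \le k-1$, we have $x_{j-1} < x_j < x_{j+1}$. The layered telescoping identity then gives $d(x_{j-1},x_{j+1}) = d(x_{j-1},x_j) + d(x_j,x_{j+1})$, so $\partial^{(j)} x = \nabla_j x$ always, with no terms dropped. Hence $\partial_k x = \sum_{j=1}^{k-1} (-1)^j \nabla_j x$. Under the bijection, deleting internal vertex $x_j$ corresponds to deleting the $(j-1)$th vertex of the simplex $\{x_1,\dots,x_{k-1}\}$. So $\partial_k$ corresponds to $\sum_{i=0}^{k-2}(-1)^{i+1} d_i = -\tilde\partial_{k-2}$.

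This sign discrepancy is the only real obstacle. We fix it by twisting the identification by $(-1)^k$ in degree $k$, or simply by observing that negating a differential yields an isomorphic complex. Finally, for $k=2$ we check that the augmentation matches. A 2-simplex $(u,w,v)$ maps to $-(u,v)$, which corresponds to $\tilde\partial$ of the vertex $w$ being the augmentation generator, up to that same sign. This yields the isomorphism \eqref{eq:layeredChain} of chain complexes.
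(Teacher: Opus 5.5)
Your proposal is correct and follows essentially the same route as the paper: finite lengths telescope to $d(u,v)$ (the paper phrases this as every path from $u$ to $v$ having length $d(u,v)$), generators correspond bijectively to chains of internal vertices in $(u,v)$ with $k=1$ giving the augmentation, and the geodesic condition holds automatically, so $\partial^{(j)} = \nabla_j$. Your explicit computation $\partial_k \leftrightarrow -\tilde\partial_{k-2}$ with the $(-1)^k$ twist makes precise the paper's ``up to sign,'' and your remarks on infinite-length tuples and the degenerate case $u=v$ are sensible additions that the paper leaves implicit.
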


% Claude Opus 4.7 produced the original version of this proof, which we have adapted.
% % <TODO>
% % \footnote{See \url{https://claude.ai/share/98f55def-f72a-4bfc-b0a3-46bc36ff8864}}
% % </TODO>

\begin{proof}
The vanishing of $MC_{k,L}^{(u,v)}(D)$ for $L \ne d(u,v)$ follows immediately from the observation that since $D$ is layered, every path from $u$ to $v$ has length $d(u,v)$. The same observation yields that generators of $MC_{k,d(u,v)}^{(u,v)}(D)$ are precisely the $k$-simplices of length $\Lambda(v) - \Lambda(u) = d(u,v)$. Since the endpoints $u$ and $v$ are given, generators of $MC_{k,d(u,v)}^{(u,v)}(D)$ are in bijective correspondence with $(k-2)$-simplices in $\Delta((u,v))$. For $k = 1$, this trivializes to the augmentation $\tilde C_{-1} \cong R$, where as usual $R$ is the coefficient ring.

Now consider the boundary operator acting on $x = (x_0, \dots , x_k)$ with $x_{j-1} < x_j < x_{j+1}$ for $1 \le j \le k-1$. We automatically have that $d(x_{j-1},x_{j+1}) = d(x_{j-1},x_j) + d(x_j,x_{j+1})$, so $\partial^{(j)} \equiv \nabla_j$ and $\partial_k$ corresponds to the simplicial differential (up to sign). The isomorphism \eqref{eq:layeredChain} follows. 
\end{proof}

The Philip Hall theorem yields

\begin{corollary}\label{cor:mobius}
Let $D$ be a layered DAG and $u \le v$. Then
\begin{equation}\label{eq:mobius}
\chi_{\bullet,d(u,v)}^{(u,v)}(D) = \tilde \chi (\Delta((u,v))) = \mu_D(u,v),
\end{equation}
and $\chi_{\bullet,L}^{(u,v)}(D) = 0$ for $L \ne d(u,v)$. Moreover, $MH_{k,d(u,v)}^{(u,v)}(D) \cong \tilde H_{k-2}(\Delta((u,v)))$.
\end{corollary}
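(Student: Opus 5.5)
Corollary \ref{cor:mobius} follows from Theorem \ref{thm:layeredChain} by passing to homology and taking alternating sums, with a degree shift; the Philip Hall theorem then finishes.

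\emph{Homology.} The isomorphism \eqref{eq:layeredChain} is levelwise and commutes with the differentials up to sign. To make it a genuine chain isomorphism, rescale the generator corresponding to $(u,x_1,\dots,x_{k-1},v)$ by a sign $(-1)^{\epsilon(k)}$ chosen to absorb the discrepancy between $\partial_k=\sum_{j=1}^{k-1}(-1)^j\nabla_j$ and the simplicial boundary $\sum_{j=0}^{k-2}(-1)^j d_j$ on $\tilde C_{k-2}$. Removing the $j$th internal vertex corresponds to the face $d_{j-1}$, so the discrepancy is a global factor $-1$ in each degree. A sign such as $(-1)^{k}$ on degree $k$ cancels it. A chain isomorphism induces isomorphisms on homology, so
\[
MH_{k,d(u,v)}^{(u,v)}(D)\cong \tilde H_{k-2}(\Delta((u,v))).
\]
The augmentation convention handles the degenerate cases. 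When $u\lessdot v$, the open interval is empty, $\Delta(\varnothing)$ has $\tilde H_{-1}=R$, and this matches the single $1$-simplex $(u,v)$. When $u=v$, we have $d(u,v)=0$ and only the $0$-simplex $(u)$ exists; I would treat this case directly, reading it as $\tilde C_{-2}$ of the formal augmentation with $\mu_D(u,u)=1$.

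\emph{Euler characteristic.} Since $MC^{(u,v)}_{k,L}(D)=0$ for $L\neq d(u,v)$, all of its homology vanishes, so $\chi_{\bullet,L}^{(u,v)}(D)=0$ for $L\ne d(u,v)$. For $L=d(u,v)$,
\[
\chi_{\bullet,d(u,v)}^{(u,v)}(D)=\sum_k(-1)^k\operatorname{rank}\tilde H_{k-2}(\Delta((u,v)))=\sum_j(-1)^{j}\operatorname{rank}\tilde H_j=\tilde\chi(\Delta((u,v))),
\]
because shifting the degree by $2$ does not change signs. Everything is finite since $D$ is finite, so rank additivity over a field or PID applies. Alternatively, one can compute the alternating sum directly on chain ranks.

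\emph{Philip Hall.} Let $D'$ be the open interval $(u,v)$ as a poset. Then $\hat{D'}$ is isomorphic to the closed interval $[u,v]$, with $u$ and $v$ playing the roles of $\hat 0$ and $\hat 1$. The Möbius function is computed within intervals, so $\mu_D(u,v)=\mu_{[u,v]}(\hat0,\hat1)=\tilde\chi(\Delta((u,v)))$ by the Philip Hall theorem. Here the order on $D$ is reachability, consistent with the layered grading.

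The main obstacle is bookkeeping rather than substance. The real check is the sign normalization making \eqref{eq:layeredChain} a chain map; the Euler characteristic statement is insensitive to it, but the homology isomorphism needs it. The $u=v$ edge case also requires care.
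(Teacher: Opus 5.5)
Your argument is correct for $u<v$ and follows the paper's route: pass Theorem~\ref{thm:layeredChain} to homology and to alternating sums, then apply the Philip Hall theorem to $[u,v]\cong\widehat{(u,v)}$. The sign rescaling is harmless but not needed, even for the homology statement, since $\partial$ and $-\partial$ have the same kernels and images.

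The one thing to fix is $u=v$. There $MH^{(u,u)}_{\bullet,0}(D)=R$, concentrated in degree $0$, so $\chi^{(u,u)}_{\bullet,0}(D)=1=\mu_D(u,u)$. But $(u,u)=\varnothing$, and the paper explicitly uses $\tilde\chi(\varnothing)=-1$. That gives $\tilde\chi(\Delta((u,u)))=-1$, and $\tilde H_{-1}(\Delta(\varnothing))\cong R$ would sit in degree $k=1$. So as written, the middle equality and the homology isomorphism are false at $u=v$, and so is Theorem~\ref{thm:layeredChain} itself.

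Your ``formal augmentation in degree $-2$'' amounts to declaring $\Delta((u,u))$ a $(-2)$-sphere, which conflicts with that convention. Either state this change of convention explicitly, or restrict the corollary to $u<v$ and record $\chi^{(u,u)}_{\bullet,0}(D)=\mu_D(u,u)=1$ as a separate trivial case.
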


\subsection{Multilayer perceptron DAGs}\label{sec:mlp}

The MLP DAG $K^\rightarrow_{n_1,\dots,n_M}$ is layered, with grading $\Lambda$ indicating the layer in an obvious way. To fix a st-DAG, it is convenient to take $n_1 = 1 = n_M$. Inspection reveals that
$\Delta(K^\rightarrow_{1,n_2,\dots,n_{M-1},1}) \cong [n_2] * \dots * [n_{M-1}]$, where an order complex and the usual simplicial join are respectively indicated on the left and right sides. Deploying the appropriate K\"unneth theorem (see, e.g., Theorem 5.1.4 of \citet{wachs2007poset}) with the observation $\tilde \beta_k([n]) = \delta_{k0} \cdot (n-1)$ yields the following more elegant accounting of Lemma \ref{lem:mlpHomologyST}.
\footnote{This is substantially simpler than the analogous situation in path homology \citep{chowdhury2019path}.}

\begin{proposition}\label{pro:mlpHomologyST}
\begin{equation}\label{eq:Kunneth}
\tilde \beta_k([n_2] * \dots * [n_{M-1}]) = \delta_{k,M-3} \cdot \prod_{m=2}^{M-1} (n_m-1).
\end{equation}
\end{proposition}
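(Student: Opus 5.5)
The plan is to prove the formula by induction on the number of join factors, using the Künneth theorem for joins over a field (Theorem 5.1.4 of \citet{wachs2007poset}):
\[\tilde H_r(\Delta_1 * \Delta_2) \cong \bigoplus_{i+j=r-1} \tilde H_i(\Delta_1) \otimes \tilde H_j(\Delta_2).\]
In terms of ranks this reads $\tilde\beta_r(\Delta_1*\Delta_2) = \sum_{i+j=r-1}\tilde\beta_i(\Delta_1)\tilde\beta_j(\Delta_2)$. If we want integer coefficients rather than a field, we note that every reduced homology group appearing below turns out to be free (a wedge of spheres), so no Tor terms arise. Alternatively we simply work over a field, which is what the Betti number statement requires.

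For the base case we take a single factor $[n]$, viewed as $n$ discrete points. Its reduced homology is concentrated in degree $0$, of rank $n-1$, so $\tilde\beta_k([n]) = \delta_{k0}(n-1)$. With $M-2=1$ factor this agrees with \eqref{eq:Kunneth}, since then $M-3=0$.

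For the inductive step, write $J_p := [n_2]*\dots*[n_{p}]$ and assume $\tilde\beta_k(J_p) = \delta_{k,p-2}\prod_{m=2}^{p}(n_m-1)$. Then $J_{p+1} = J_p * [n_{p+1}]$. In the Künneth sum only the term $i=p-2$, $j=0$ survives, which forces $r = p-1$. The rank is therefore $\prod_{m=2}^{p}(n_m-1)\cdot(n_{p+1}-1)$, concentrated in degree $(p+1)-2$, and this closes the induction. Setting $p+1 = M-1$ gives \eqref{eq:Kunneth}.

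The main point needing care is the degree bookkeeping in the join formula, namely the shift by one in $i+j=r-1$. There is also the degenerate case where some $n_m = 1$: then $[1]$ is a point, so the join is a cone and is contractible, and both sides vanish. The formula handles this automatically. Finally, we would remark that combining this with the isomorphism $\Delta((s,t)) \cong [n_2]*\dots*[n_{M-1}]$ and Corollary \ref{cor:mobius} places the magnitude homology in degree $k = M-1$ and length $L = M-1$. This recovers Lemma \ref{lem:mlpHomologyST} in the st-case.
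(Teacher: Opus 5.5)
Your proof is correct and follows the paper's route. The paper simply invokes the join K\"unneth theorem (Theorem 5.1.4 of Wachs) together with $\tilde\beta_k([n]) = \delta_{k0}(n-1)$; your induction on the number of join factors, including the degree shift $i+j=r-1$ and the remark that all groups are free so no Tor terms arise over $\mathbb{Z}$, just writes out that iteration explicitly.
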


\subsection{Basic examples}\label{sec:BasicEulerExamples}

Figures \ref{fig:euler_structured} and \ref{fig:euler_N_1000000_n_6_L_3_E_10} show that $\chi$ increases with parallel structure in layered st-DAGs, even when  structural features such as the number of vertices and arcs per layer are held fixed.

\begin{figure}[htbp]
    \centering
    \includegraphics[width=\textwidth, trim={0 125 0 0mm}, clip]{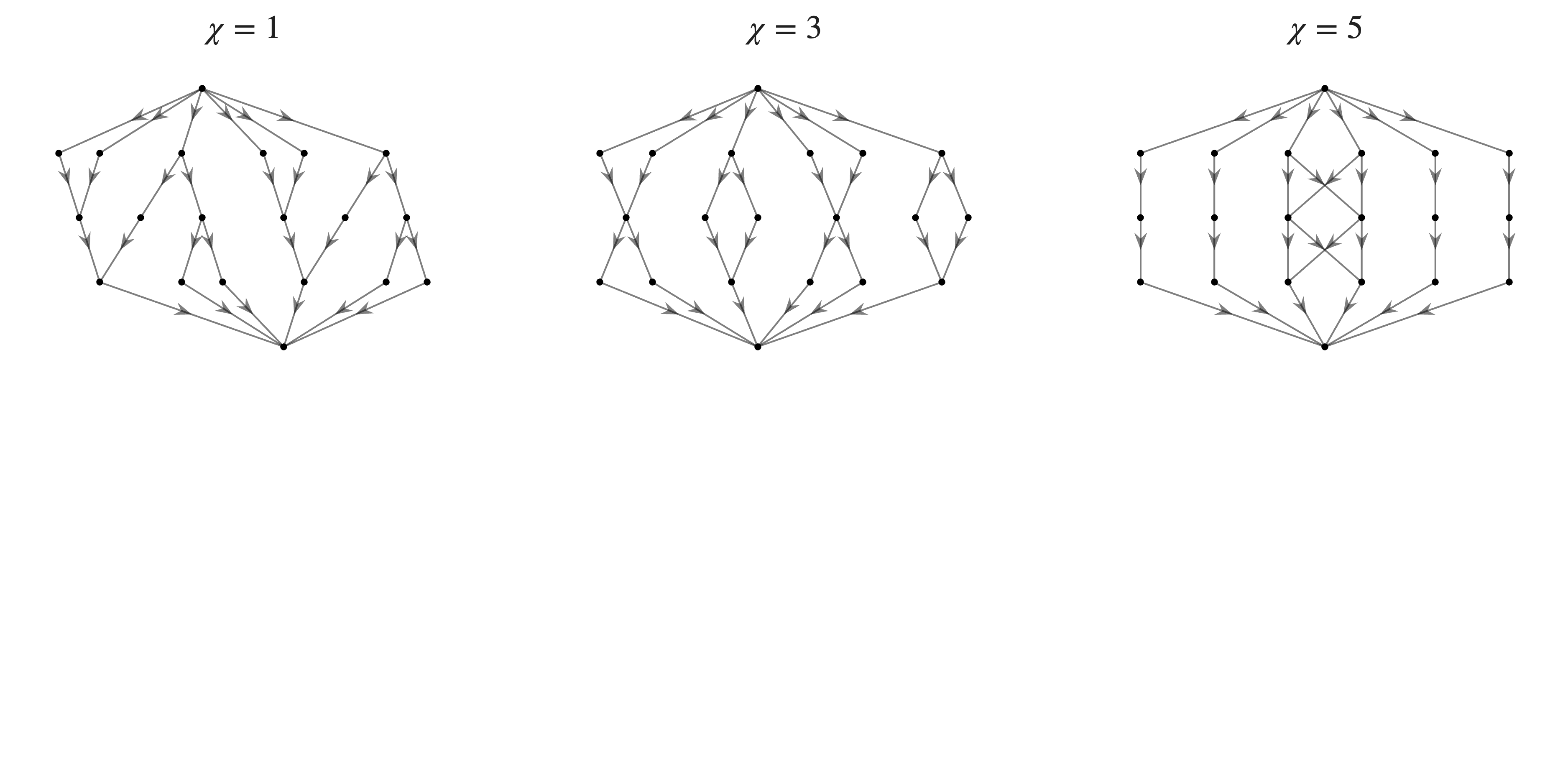}
        \caption{$\chi \equiv \chi_{\bullet,d(s,t)}^{(s,t)}$ for layered st-DAGs with six vertices in each of three intermediate layers and eight arcs between intermediate layers.}
    \label{fig:euler_structured}
\end{figure}

\begin{figure}[htbp]
    \centering
    \includegraphics[width=\textwidth, trim={0 5 0 0mm}, clip]{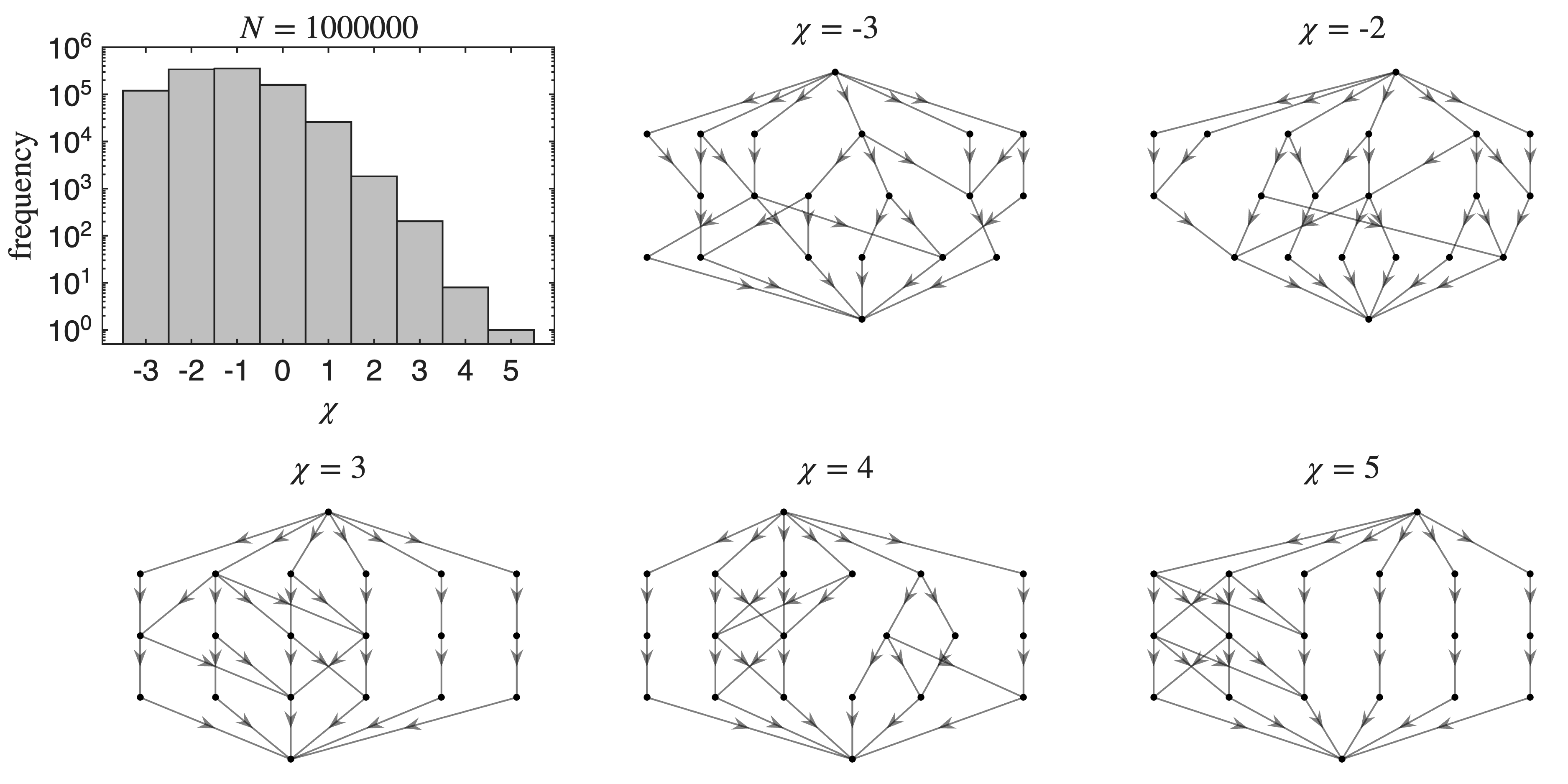}
        \caption{Upper left: a histogram of $\chi \equiv \chi_{\bullet,d(s,t)}^{(s,t)}$ for randomly sampled layered st-DAGs with six vertices in each of three intermediate layers, and ten arcs between adjacent intermediate layers. Other panels: representative st-DAGs for indicated $\chi$.}
    \label{fig:euler_N_1000000_n_6_L_3_E_10}
\end{figure}

\subsection{Dynamic analysis of MLPs}\label{sec:MLPEulerExamples}

Topological methods to analyze neural networks have recently proliferated \citep{ballester2026topological}, but the usual tools of persistent homology are not suited for structural analyses of neural networks \emph{qua} digraphs. Meanwhile, path homology \citep{chowdhury2019path} scales badly and lacks the granularity of magnitude homology's direct sum decomposition \eqref{eq:directSum}.

Following the thread of \S \ref{sec:BasicEulerExamples}, we construct and compute Euler characteristics for a family of layered st-DAGs associated with a MLP and input data. These st-DAGs have fixed numbers of vertices and fixed numbers of arcs per layer. This ensures that any features brought out by Euler characteristics are not attributable to any coarse graph structure.
\footnote{In previous investigations not detailed here, we computed Euler characteristics of st-DAGs that were not controlled in the way described above. We found that entries of the MLP's confusion matrix were very strongly correlated with the (logarithms of) average Euler characteristics computed over data corresponding to the confusion matrix entries. This was simply because the st-DAGs aggregated data over inputs and so grew according to the size of the input data.}

The basic st-DAG construction is simple and flexible. For a given source/input neuron $s$, target/output neuron $t$, and input data $X$, we consider a \emph{node score} $\nu_{s,t,X} : V(K^\rightarrow) \rightarrow \mathbb{R}$ and an \emph{arc score} $\alpha_{s,t,X} : A(K^\rightarrow) \rightarrow \mathbb{R}$. For convenience, we restrict consideration to MLPs with hidden layers of a fixed width and set a fixed number $k$ of neurons per layer to retain in each st-DAG. For each layer, the neurons with the $k$ highest values of $\nu$ are retained. The input and output neurons are connected to all $k$ retained neurons in the first and last hidden layers, respectively. Retained neurons are connected across adjacent hidden layers by $2k$ arcs. These arcs include: i) an arc from a retained neuron $u$ in layer $\ell$ to the neuron $u'$ in layer $\ell+1$ that produces the highest value of $\alpha$; ii) an arc to a retained neuron $v'$ in layer $\ell+1$ from the neuron $v$ in layer $\ell$ that produces the highest value of $\alpha$; and iii) any surplus arcs required to spend the budget of $2k$ arcs between adjacent hidden layers are chosen by the highest values of $\alpha$ that are otherwise unaccounted for. 

For a ReLU MLP with $M$ layers (including input and output layers), write $W^{(\ell)}$ for the weight (sub)matrix connecting layers $\ell$ and $\ell+1$, and write $a_u(x)$ for the activation of neuron $u$ on input $x$. If $u$ is an input neuron, then $a_u(x) = x_u$ is the appropriate component of $x$. Let $C_t$ be the subset of inputs in $X$ that are classified as $t$.

Below, we consider the specific node score
\[\nu_{s,t,X}(u) = \frac{1}{|C_t|} \sum_{x \in C_t} 1_{\text{supp} (a_{(u)})}(x) \cdot | a_s(x) |, \]
and the specific arc score 
\[\alpha_{s,t,X}(u,u') = 
\begin{cases} 
| W^{(1)}_{uu'} | \cdot \nu_{s,t,X}(u') & \text{if } s = u, \Lambda(u) = 1, \text{ and } \Lambda(u') = 2 \\ 
| W^{(\ell-1)}_{uu'} | \cdot \hat \alpha_{s,t,X} (u,u') & \text{if } \Lambda(u) = \ell-1 \text{ and } \Lambda(u') = \ell \text{ for } 2 < \ell < M\\
| W^{(M-1)}_{uu'} | \cdot \nu_{s,t,X}(u) & \text{if } t = u', \Lambda(u) = M-1, \text{ and } \Lambda(u') = M
\end{cases}\]
where
\[\hat \alpha_{s,t,X} (u,u') := \frac{1}{|C_t|} \sum_{x \in C_t} 1_{\text{supp} (a_{(u)})}(x) \cdot 1_{\text{supp} (a_{(u')})}(x) \cdot | a_s(x) |.\]

Figures \ref{fig:mnist1} and \ref{fig:mnist2} show $\chi \equiv \chi_{\bullet,d(s,t)}^{(s,t)}$ for st-DAGs formed between input and output neurons as described above with $k = 64$ for two MLPs trained on MNIST. Digits are discernible, indicating the utility of $\chi$ for dynamically analyzing MLPs in a way that faithfully captures real structure and that may support mechanistic interpretability \citep{somvanshi2026bridging}. Again, we stress that the number of vertices per st-DAG layer and the number of arcs between st-DAG layers are held constant by the construction above: the structure that is captured in the figures precisely encodes nonconvex activation geometry between input and output neurons and for each true class.

% \begin{figure}[htbp]
%     \centering
%     \includegraphics[width=.48\textwidth, trim={0 0 5 0mm}, clip]{mhec_mnist_deep-20260602_1702.png}
%     \includegraphics[width=.48\textwidth, trim={0 0 5 0mm}, clip]{mhec_mnist_deepest-20260602_1652.png}
%         \caption{Left: $\chi^{(s,t)}_{\bullet,d(s,t)}$ for st-DAGs formed between input and output neurons as described in the main text. The underlying ReLU MLP has three hidden layers of width 128. Rows correspond data in true classes and columns correspond to data in predicted classes; blank entries correspond to zeros in the confusion matrix. Subpanels show the Euler characteristic per input neuron/pixel; all are on a common linear grayscale shown to the right and trimmed at extremes. Right: as for the left panel, but for a MLP with five hidden layers of width 128.}
%     \label{fig:mnist}
% \end{figure}

\begin{figure}[htbp]
    \centering
    \includegraphics[width=.7\textwidth, trim={0 0 65 0mm}, clip]{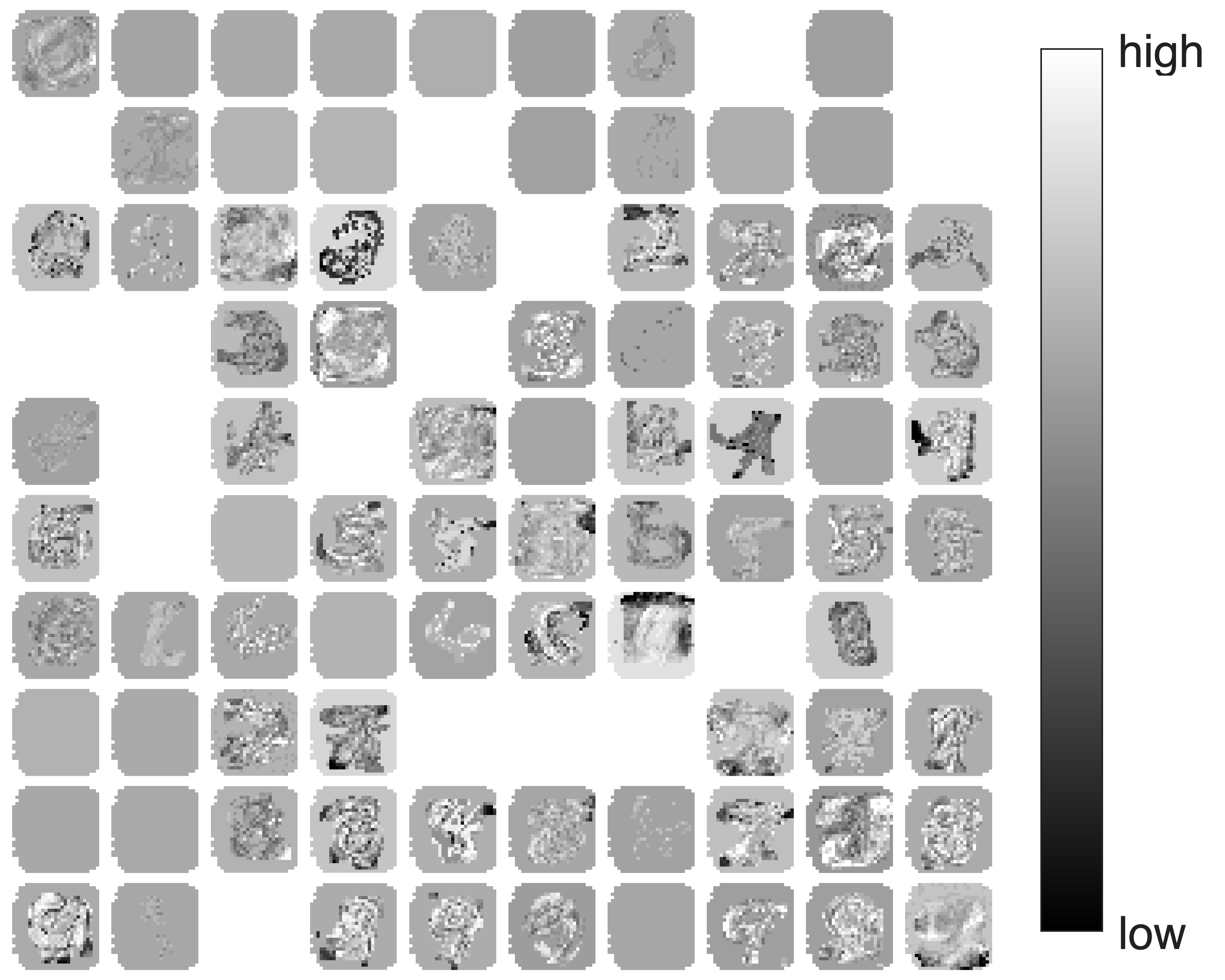}
        \caption{$\chi^{(s,t)}_{\bullet,d(s,t)}$ for st-DAGs formed between input and output neurons as described in the main text. The underlying ReLU MLP has three hidden layers of width 128. Rows correspond data in true classes and columns correspond to data in predicted classes; blank entries correspond to zeros in the confusion matrix. Subpanels show the Euler characteristic per input neuron/pixel; all are on a common linear grayscale trimmed at extremes, with black indicating the lower end and white indicating the upper end.}
    \label{fig:mnist1}
\end{figure}

\begin{figure}[htbp]
    \centering
    \includegraphics[width=.7\textwidth, trim={0 0 65 0mm}, clip]{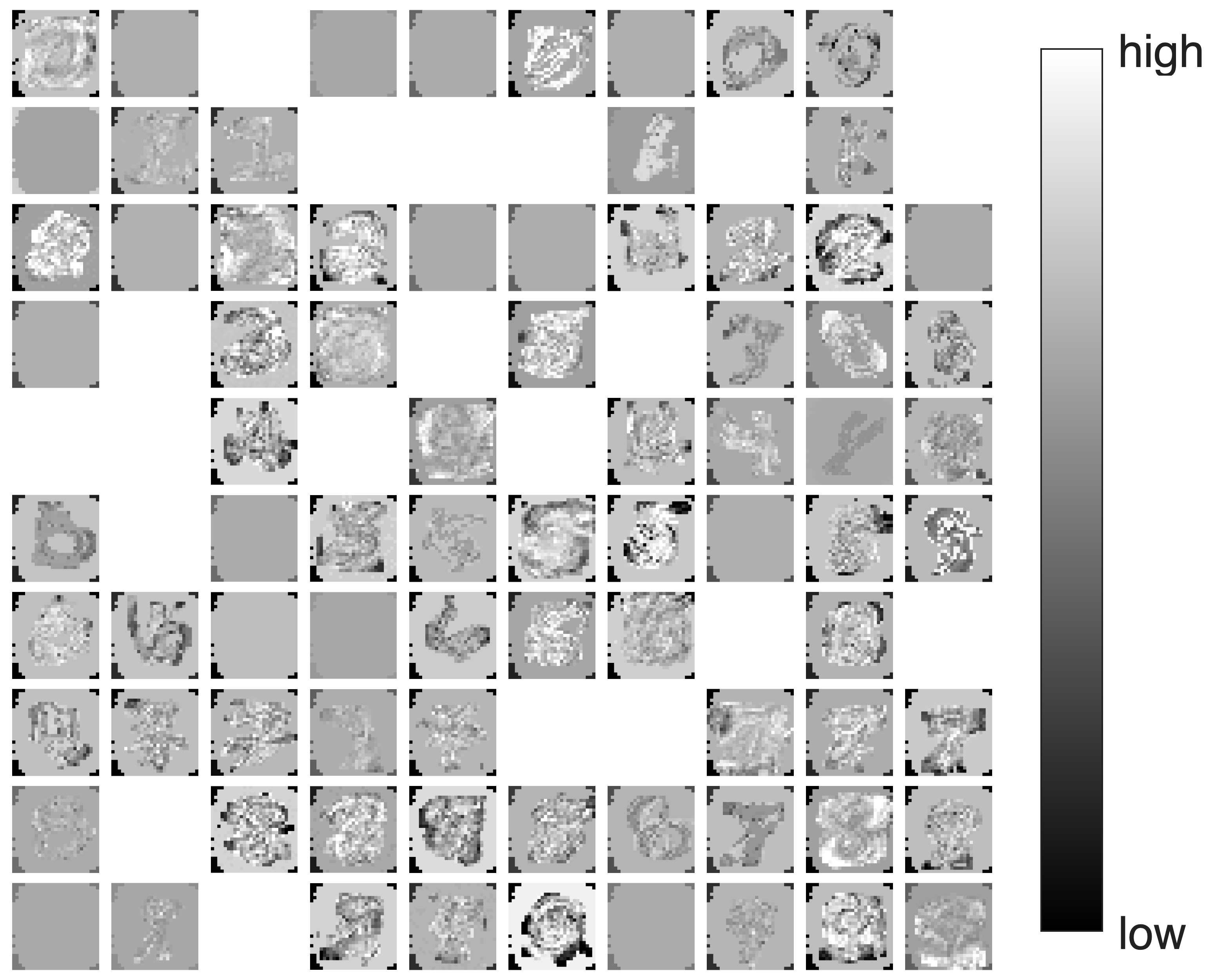}
        \caption{As in Figure \ref{fig:mnist1}, but for a MLP with five hidden layers of width 128.}
    \label{fig:mnist2}
\end{figure}

\section{Closing remark}\label{sec:remarks}

A plausible application not treated here is an analogue of the ordinary \emph{Euler characteristic transform} \citep{curry2012euler,curry2022many,munch2025invitation,rieck2025topology} by using directions in Euclidean space to ``Morse-theoretically'' define DAGs on point clouds. Any fixed notion of locality (e.g., a threshold on radius, number of nearest neighbors, or the \emph{peel neighborhood} construction of \citet{huntsman2026peel}) defines an undirected graph on a point cloud that can be readily directed to produce st-DAGs for points in general position. A key practical challenge is how to assemble Euler characteristics of different length scales across directions: one possibility is to stipulate a normalized quantized length along the Euclidean space directions.

% <TODO>
\section*{Acknowledgements}

Thanks to Evan Gorman, Giulia Menara, and Michael Robinson for illuminating conversations. I used Claude Opus to help find references, develop results, and to write the code in \S \ref{sec:floatingPointArithmetic}, but the responsibility for results, code, and for the writing itself are mine.

This research was partially developed with funding from the Defense Advanced Research Projects Agency (DARPA). The views, opinions and/or findings expressed are those of the author and should not be interpreted as representing the official views or policies of the Department of Defense or the U.S. Government. Distribution Statement “A” (Approved for Public Release, Distribution Unlimited).

\bibliographystyle{unsrtnat}
\bibliography{MHEC}

@article{abate2006unified,
  title={A unified framework for numerically inverting {Laplace} transforms},
  author={Abate, Joseph and Whitt, Ward},
  journal={INFORMS Journal on Computing},
  volume={18},
  number={4},
  pages={408--421},
  year={2006},
  publisher={INFORMS}
}

@article{adamer2024magnitude,
  title={The magnitude vector of images},
  author={Adamer, Michael F and De Brouwer, Edward and O’Bray, Leslie and Rieck, Bastian},
  journal={Journal of Applied and Computational Topology},
  volume={8},
  number={3},
  pages={447--473},
  year={2024},
  publisher={Springer}
}

@article{asao2020geometric,
  title={Geometric approach to graph magnitude homology},
  author={Asao, Yasuhiko and Izumihara, Kengo},
  journal={Homology, Homotopy and Applications},
  volume={23},
  number={1},
  pages={297--310},
  year={2020},
  publisher={International Press of Boston}
}

@article{asao2022magnitude,
  title={Magnitude homology and path homology},
  author={Asao, Yasuhiko},
  journal={Bulletin of the London Mathematical Society},
  volume={55},
  number={1},
  pages={375--398},
  year={2023},
  publisher={Wiley Online Library}
}

@article{babai1986lovasz,
  title={On {Lov{\'a}sz’} lattice reduction and the nearest lattice point problem},
  author={Babai, L{\'a}szl{\'o}},
  journal={Combinatorica},
  volume={6},
  number={1},
  pages={1--13},
  year={1986},
  publisher={Springer}
}

@book{ballester2026topological,
  title={Topological Data Analysis for Neural Networks},
  author={Ballester, Rub{\'e}n and Casacuberta, Carles and Escalera, Sergio},
  year={2026},
  publisher={Springer}
}

@article{bi2024persistent,
  title={Persistent magnitude for the quantitative analysis of the structure and stability of carboranes},
  author={Bi, Wanying and Fu, Xin and Li, Jingyan and Wu, Jie},
  journal={Journal of Computational Biophysics and Chemistry},
  volume={23},
  number={06},
  pages={741--751},
  year={2024},
  publisher={World Scientific}
}

@article{bi2025topological,
  title={Topological magnitude for protein flexibility analysis},
  author={Bi, Wanying and Feng, Hongsong and Wu, Jie and Li, Jingyan and Wei, Guo-Wei},
  journal={Royal Society Open Science},
  volume={12},
  number={12},
  year={2025},
  publisher={The Royal Society}
}

@article{bunch2021weighting,
  title={Weighting vectors for machine learning: numerical harmonic analysis applied to boundary detection},
  author={Bunch, Eric and Kline, Jeffery and Dickinson, Daniel and Bhat, Suhaas and Fung, Glenn},
  journal={arXiv preprint arXiv:2106.00827},
  year={2021}
}

@article{chang2013effects,
  title={Effects of the {LLL} reduction on the success probability of the {Babai} point and on the complexity of sphere decoding},
  author={Chang, Xiao-Wen and Wen, Jinming and Xie, Xiaohu},
  journal={IEEE Transactions on Information Theory},
  volume={59},
  number={8},
  pages={4915--4926},
  year={2013},
  publisher={IEEE}
}

@article{cho2019quantales,
  title={Quantales, persistence, and magnitude homology},
  author={Cho, Simon},
  journal={arXiv preprint arXiv:1910.02905},
  year={2019}
}

@inproceedings{chowdhury2019path,
  title={Path homologies of deep feedforward networks},
  author={Chowdhury, Samir and Gebhart, Thomas and Huntsman, Steve and Yutin, Matvey},
  booktitle={IEEE International Conference On Machine Learning And Applications (ICMLA)},
  year={2019}
}

@article{curry2012euler,
  title={{Euler} calculus with applications to signals and sensing},
  author={Curry, Justin and Ghrist, Robert and Robinson, Michael},
  journal={Advances in Applied and Computational Topology},
  volume={70},
  pages={75--145},
  year={2012},
  publisher={American Mathematical Society}
}

@article{curry2022many,
  title={How many directions determine a shape and other sufficiency results for two topological transforms},
  author={Curry, Justin and Mukherjee, Sayan and Turner, Katharine},
  journal={Transactions of the American Mathematical Society, Series B},
  volume={9},
  number={32},
  pages={1006--1043},
  year={2022}
}

@article{dingfelder2015improved,
  title={An improved {Talbot} method for numerical {Laplace} transform inversion},
  author={Dingfelder, Benedict and Weideman, JAC},
  journal={Numerical Algorithms},
  volume={68},
  number={1},
  pages={167--183},
  year={2015},
  publisher={Springer}
}

@article{emmerich2026magnitude,
  title={The magnitude of dominated sets: a {Pareto} compliant indicator grounded in metric geometry},
  author={Emmerich, Michael},
  journal={arXiv preprint arXiv:2604.18147},
  year={2026}
}

@article{gautschi2011optimally,
  title={Optimally scaled and optimally conditioned {Vandermonde} and {Vandermonde}-like matrices},
  author={Gautschi, Walter},
  journal={BIT Numerical Mathematics},
  volume={51},
  number={1},
  pages={103--125},
  year={2011},
  publisher={Springer}
}

@book{golub2013matrix,
  title={Matrix Computations},
  author={Golub, Gene H and Van Loan, Charles F},
  year={2013},
  publisher={Johns Hopkins}
}

@article{govc2021persistent,
  title={Persistent magnitude},
  author={Govc, Dejan and Hepworth, Richard},
  journal={Journal of Pure and Applied Algebra},
  volume={225},
  number={3},
  pages={106517},
  year={2021},
  publisher={Elsevier}
}

@book{hatcher2002algebraic,
  title={Algebraic Topology},
  author={Hatcher, Allen},
  year={2002},
  publisher={Cambridge}
}

@article{hepworth2017categorifying,
  title={Categorifying the magnitude of a graph},
  author={Hepworth, Richard and Willerton, Simon},
  journal={Homology, Homotopy and Applications},
  volume={19},
  number={2},
  pages={31--60},
  year={2017},
  publisher={International Press of Boston}
}

@article{hepworth2018magnitude,
  title={Magnitude cohomology},
  author={Hepworth, Richard},
  journal={Mathematische Zeitschrift},
  volume={301},
  number={4},
  pages={3617--3640},
  year={2022},
  publisher={Springer}
}

@article{huntsman2026peel,
  title={Peel neighborhoods},
  author={Huntsman, Steve},
  journal={arXiv preprint arXiv:2603.26645},
  year={2026}
}

@article{kachura2025blurred,
  title={Blurred magnitude homology of functional connectome for {ASD} diagnosis},
  author={Kachura, Alexander and Chernyshev, Vsevolod and Kachan, Oleg and Levchenko, Egor},
  journal={Frontiers in Psychiatry},
  volume={16},
  pages={1677282},
  year={2025},
  publisher={Frontiers Media SA}
}

@article{kaneta2021magnitude,
  title={Magnitude homology of metric spaces and order complexes},
  author={Kaneta, Ryuki and Yoshinaga, Masahiko},
  journal={Bulletin of the London Mathematical Society},
  volume={53},
  number={3},
  pages={893--905},
  year={2021},
  publisher={Wiley Online Library}
}

@article{leinster2013magnitude,
  title={The magnitude of metric spaces},
  author={Leinster, Tom},
  journal={Documenta Mathematica},
  volume={18},
  pages={857--905},
  year={2013}
}

@incollection{leinster2017magnitude,
  title={The magnitude of a metric space: from category theory to geometric measure theory},
  author={Leinster, Tom and Meckes, Mark W},
  booktitle={Measure Theory in Non-Smooth Spaces},
  editor={Gigli, Nicola},
  pages={156--193},
  year={2017},
  publisher={De Gruyter}
}

@book{leinster2021entropy,
  title={Entropy and Diversity: the Axiomatic Approach},
  author={Leinster, Tom},
  year={2021},
  publisher={Cambridge}
}

@article{leinster2021magnitude,
  title={Magnitude homology of enriched categories and metric spaces},
  author={Leinster, Tom and Shulman, Michael},
  journal={Algebraic \& Geometric Topology},
  volume={21},
  number={5},
  pages={2175--2221},
  year={2021},
  publisher={Mathematical Sciences Publishers}
}

@incollection{limbeck2024metric,
  title={Metric space magnitude for evaluating the diversity of latent representations},
  author={Limbeck, Katharina and Andreeva, Rayna and Sarkar, Rik and Rieck, Bastian},
  booktitle={Advances in Neural Information Processing Systems},
  year={2024}
}

@incollection{limbeck2026geometry,
  title={Geometry-aware edge pooling for graph neural networks},
  author={Limbeck, Katharina and Mezrag, Lydia and Wolf, Guy and Rieck, Bastian},
  booktitle={Advances in Neural Information Processing Systems},
  year={2026}
}

@article{munch2025invitation,
  title={An invitation to the {Euler} characteristic transform},
  author={Munch, Elizabeth},
  journal={The American Mathematical Monthly},
  volume={132},
  number={1},
  pages={15--25},
  year={2025},
  publisher={Taylor \& Francis}
}

@article{otter2018magnitude,
  title={Magnitude meets persistence: homology theories for filtered simplicial sets},
  author={Otter, Nina},
  journal={Homology, Homotopy and Applications},
  volume={24},
  number={2},
  pages={365--387},
  year={2022}
}

@article{pegolotti2022fast,
  title={Fast {M\"obius} and zeta transforms},
  author={Pegolotti, Tommaso and Seifert, Bastian and P{\"u}schel, Markus},
  journal={arXiv preprint arXiv:2211.13706},
  year={2022}
}

@inproceedings{pereverdieva2025comparative,
  title={Comparative analysis of indicators for multi-objective diversity optimization},
  author={Pereverdieva, Ksenia and Deutz, Andr{\'e} and Ezendam, Tessa and B{\"a}ck, Thomas and Hofmeyer, H{\`e}rm and Emmerich, Michael},
  booktitle={International Conference on Evolutionary Multi-Criterion Optimization},
  year={2025}
}

@book{plonka2018numerical,
  title={Numerical Fourier Analysis},
  author={Plonka, Gerlind and Potts, Daniel and Steidl, Gabriele and Tasche, Manfred},
  year={2018},
  publisher={Springer}
}

@article{rieck2025topology,
  title={Topology meets machine mearning: an introduction using the {Euler} characteristic transform},
  author={Rieck, Bastian},
  journal={Notices of the American Mathematical Society},
  volume={72},
  pages={719--727},
  year={2025}
}

@book{robinson2026computational,
  title={Computational Homological Algebra},
  author={Robinson, Michael},
  year={2026},
  publisher={Springer Nature}
}

@article{roff2025iterated,
  title={Iterated magnitude homology},
  author={Roff, Emily},
  journal={Advances in Mathematics},
  volume={468},
  pages={110210},
  year={2025},
  publisher={Elsevier}
}

@article{somvanshi2026bridging,
  title={Bridging the black box: a survey on mechanistic interpretability in {AI}},
  author={Somvanshi, Shriyank and Islam, Md Monzurul and Rafe, Amir and Tusti, Anannya Ghosh and Chakraborty, Arka and Baitullah, Anika and Chowdhury, Tausif Islam and Alnawmasi, Nawaf and Dutta, Anandi and Das, Subasish},
  journal={ACM Computing Surveys},
  volume={58},
  number={8},
  pages={1--35},
  year={2026},
  publisher={ACM New York, NY}
}

@book{stanley1997enumerative,
  title={Enumerative Combinatorics},
  volume={1},
  author={Stanley, Richard P},
  journal={Cambridge},
  year={1997}
}

@incollection{wachs2007poset,
  author = {Wachs, Michelle},
  title = {Poset topology: tools and applications},
  editor = {Miller, Ezra and Reiner, Victor and Sturmfels, Bernd},
  booktitle = {Geometric Combinatorics},
  year={2007},
  publisher={AMS}
}

\appendix

\section{\label{sec:code}Fast Euler characteristic code}

\subsection{\label{sec:floatingPointArithmetic}Floating-point code}

\begin{footnotesize}
    \begin{verbatim}
        function chi = magnitude_homology_euler_st(adj)

        % Magnitude homology Euler characteristic of a st-DAG with adjacency matrix
        % adj, unique source 1, and unique target size(adj,1).
        %
        % Uses a reasonable Ansatz to mitigate the intrinsically horrible
        % conditioning of a generalized Vandermonde matrix, and Babai's nearest
        % plane approximation for solving integer least squares.
        %
        % TODO: Generalize to blurred magnitude homology, for which see p.15 of
        % https://arxiv.org/pdf/1911.11016
        
        %%
        dag = digraph(adj);
        assert(isequal(1,find(indegree(dag)==0)),"not unique source at 1");
        assert(isequal(size(dag.Nodes,1),find(outdegree(dag)==0)),...
            "not unique target at last index");
        
        %% Magnitude homology Euler characteristic step 0: lengths of st-paths
        L_st = [];
        adj_pow = eye(size(adj));
        ell = 0;
        while any(adj_pow,"all")
            adj_pow = adj_pow*adj;
            ell = ell+1;
            if adj_pow(1,size(adj_pow,2))
                L_st = [L_st,ell]; %#ok<*AGROW>
            end
        end
        
        %% Magnitude homology Euler characteristic step 1: choose q
        % Vandermonde(-like) matrices are horribly conditioned: see, e.g.
        % https://doi.org/10.1007/s10543-010-0293-1. Accordingly, an Ansatz to make
        % the matrix A below as well-conditioned as possible is to take q
        % approximately constant, but not with _too good_ of an approximation.
        q = (1+1/numel(L_st)).^(1:numel(L_st));
        q = q(:);
        assert(numel(q)==numel(L_st),"numel(q)==numel(L_st)");
        
        %% Magnitude homology Euler characteristic step 2: build matrices A
        A = nan(numel(q),numel(L_st));
        for j = 1:numel(L_st)
            A(:,j) = q.^-L_st(j);
        end
        
        %% Magnitude homology Euler characteristic step 3: build vectors b
        % Entries of b are upper right elements of matrix inverses
        d_dag = distances(dag);
        b = nan(numel(q),1);
        e1 = [1;zeros(size(adj,1)-1,1)];
        for ii = 1:numel(q)
            b(ii) = e1.'*((q(ii).^-d_dag)\flipud(e1));  % st (upper right) entry
        end
        
        %% Magnitude homology Euler characteristic step 4: solve A*chi = b
        % Use an approximate solver for integer least squares: details below. Bad
        % conditioning means that A\b will generally not be integral in numerics,
        % though it is in principle and sometimes in practice.
        chi = A\b;
        if any(abs(chi-round(chi))>1e-4)
            % Not integer, use Babai to recompute
            chi = babai(A,b);
        end   
        % The absolute infinity norm residual should be much smaller than 1/2
        residual_inf = norm(A*chi-b,inf);
        if residual_inf > 0.05
            warning("norm(A*chi-b,inf) = "+string(residual_inf)+...
                ", not much smaller than 1/2");
        end
        
        end
        
        %% Local function for approximate solving integer least squares
        % Given A*x = y, find integer x approximately minimizing vecnorm(A*x-y)
        % using nearest plane approach from https://doi.org/10.1007/BF02579403. For
        % the explicit connection (and notation), see equation (10) of
        % https://doi.org/10.1109/TIT.2013.2253596
        function x = babai(A,y)
            [Q,R] = qr(A,0); 
            y_tilde = Q'*y;
            x = zeros(size(R,2),1);
            for i = size(R,2):-1:1
                x(i) = round((y_tilde(i)-R(i,i+1:end)*x(i+1:end))/R(i,i));
            end
        end
    \end{verbatim}
\end{footnotesize}

\subsection{\label{sec:polynomialArithmetic}Polynomial arithmetic code}

        % https://claude.ai/share/98f55def-f72a-4bfc-b0a3-46bc36ff8864
        %

\begin{footnotesize}
    \begin{verbatim}
        function chi = magnitude_homology_euler_st_poly(adj)
        
        % Magnitude homology Euler characteristic of a st-DAG with adjacency matrix
        % adj, unique source 1, and unique target size(adj,1). Computes the
        % (s,t)-component Euler characteristic at each st-path length by exact
        % polynomial back-substitution on the q-zeta matrix Z_q with entries
        % (Z_q)_{uv} = q^{d(u,v)} if d(u,v) < Inf and 0 otherwise. The result
        % chi(j) is the coefficient of q^{L_st(j)} in ((Z_q)^{-1})_{1,end}.
        %
        % This is the exact-arithmetic counterpart of magnitude_homology_euler_st.
        % It requires no toolboxes; only base MATLAB graph routines (digraph,
        % distances, toposort). It avoids the generalized Vandermonde conditioning,
        % the q-Ansatz, and the Babai heuristic of the floating-point routine.
        
        %%
        dag = digraph(adj);
        assert(isequal(1,find(indegree(dag)==0)),"not unique source at 1");
        assert(isequal(size(dag.Nodes,1),find(outdegree(dag)==0)),...
            "not unique target at last index");
        
        %% Step 0: lengths of st-paths
        L_st = [];
        adj_pow = eye(size(adj));
        ell = 0;
        while any(adj_pow,"all")
            adj_pow = adj_pow*adj;
            ell = ell+1;
            if adj_pow(1,size(adj_pow,2))
                L_st = [L_st,ell]; %#ok<*AGROW>
            end
        end
        
        %% Step 1: distance matrix (Inf for unreachable pairs)
        d_dag = distances(dag);
        
        %% Step 2: back-substitution for f(v) := ((Z_q)^{-1})(v, t) as polynomial
        % Polynomial coefficient vectors in ascending order:
        %   f{v} = [c_0, c_1, c_2, ...]   means    c_0 + c_1*q + c_2*q^2 + ...
        %
        % From Z_q * (Z_q)^{-1} = I one has the back-substitution
        %   f(t) = 1
        %   f(v) = -sum_{w != v, d(v,w) < Inf} q^{d(v,w)} * f(w)         for v != t.
        %
        % Process v in reverse topological order so that when computing f(v),
        % every f(w) reachable from v has already been computed. All polynomial
        % coefficients are integers throughout; multiplication by q^{d(v,w)} is
        % just a prepending of d(v,w) zeros, and additions extend by padding.
        
        ord = toposort(dag);             % ord(1) = 1 (source), ord(end) = n (target)
        n = size(adj,1);
        f = cell(n,1);
        f{n} = 1;
        for k = (n-1):-1:1
            v = ord(k);
            poly_v = 0;
            for w = 1:n
                if w == v, continue, end
                dvw = d_dag(v,w);
                if isinf(dvw), continue, end
                poly_v = polyadd(poly_v, -[zeros(1,dvw), f{w}(:)']);
            end
            f{v} = poly_v;
        end
        
        %% Step 3: extract coefficients of f(source) at lengths in L_st
        P = f{1}(:);
        chi = zeros(numel(L_st),1);
        for j = 1:numel(L_st)
            L = L_st(j);
            if L+1 <= numel(P)
                chi(j) = P(L+1);
            end
        end
        
        % All arithmetic above is exact integer arithmetic in double precision, so
        % chi must be exactly integer. A failure here indicates that intermediate
        % coefficients exceeded the 2^53 mantissa of double.
        assert(all(chi == round(chi)),"chi not integer (numerical loss of precision)");
        
        end
        
        %% Local function: polynomial addition in ascending-coefficient form
        function p = polyadd(a,b)
            a = a(:)'; b = b(:)';
            na = numel(a); nb = numel(b);
            if na < nb, a(nb) = 0; end
            if nb < na, b(na) = 0; end
            p = a + b;
        end        
    \end{verbatim}
\end{footnotesize}

\end{document}